\newtheorem*{theorem*}{Theorem}
\newtheorem*{lemma*}{Lemma}
\newtheorem*{corollary*}{Corollary}
\theoremstyle{definition}
\theoremstyle{remark}  \numberwithin{equation}{section}
\numberwithin{figure}{section}
\newcommand{\area}{\mathsf{Area}}
\newcommand{\vect}[1]{{\boldsymbol{#1}}}
\newcommand{\dif}{\mathsf{d}}
\newcommand{\ac}[1]{{\boldsymbol{#1}}} %complex structure
\newcommand{\dz}{{\dif z}}
\newcommand{\dx}{{\dif x}}
\newcommand{\PSL}{\mathrm{PSL}}
\newcommand{\eg}{\textit{e.g. }}
\newcommand{\ie}{\textit{i.e. }}
\newcommand{\volen}{\mathsf{Ent}}
\newcommand{\V}{V}
\renewcommand{\H}{\mathsf{H}}
\newcommand{\B}{\mathsf{B}}
\newcommand{\hyp}{\mathsf{hyp}}
\begin{document}

\title{Entropy degeneration of convex projective surfaces} 

\author[Xin Nie]{Xin Nie} 
\address{School of Mathematics, KIAS, 85 Hoegiro, Dongdaemun-gu, Seoul 130-722, Republic of Korea.}
\email{nie.hsin@gmail.com}
\thanks{The research leading to these results has received funding from the European Research Council under the {\em European Community}'s seventh Framework Programme (FP7/2007-2013)/ERC {\em grant agreement} ${\rm n^o}$ FP7-246918}

\maketitle

%\textbf{\red{This paper is in preparation, please do not cite or distribute.}}

\begin{abstract}
We show that the volume entropy of the Hilbert metric on a closed convex projective surface tends to zero as the corresponding Pick differential tends to infinity. 
The proof is based on the fact, due to Benoist and Hulin, that the Hilbert metric and the Blaschke metric are comparable.
\end{abstract}

\section{Introduction and statement of results}

Let $\Sigma$ be a closed oriented surface of genus at least $2$. A \emph{strictly convex real \mbox{projective} structure} (referred to simply as ``convex projective structure'' in the sequel) is by definition a $(\PSL(3,\mathbb{R}),\mathbb{RP}^2)$ geometric structure whose developing map is a homeomorphism from the universal cover $\widetilde{\Sigma}$ to a bounded convex open set in an affine chart $\mathbb{R}^2\subset\mathbb{RP}^2$.

It is a theorem of J. Loftin  \cite{loftin_amer} and F. Labourie \cite{labourie_cubic} that the moduli space $\mathcal{P}(\Sigma)$ of convex projective structures on $\Sigma$ naturally identifies with the moduli space $\mathcal{C}(\Sigma)$ of pairs $(\ac{J},\vect{b})$, where $\ac{J}$ is a conformal structure on $\Sigma$ and $\vect{b}$ is a holomorphic cubic differential on the Riemann surface $(\Sigma,\ac{J})$. The space $\mathcal{C}(\Sigma)$ is a holomorphic vector bundle of rank $5(g-1)$ over the Teichm\"uller space $\mathcal{T}(\Sigma)$, the fiber over $(\ac{J},\vect{b})$ being the space $H^0((\Sigma,\ac{J}),K^3)$ of holomorphic cubic differentials on $(\Sigma,\vect{J})$.  The natural inclusion $\mathcal{T}(\Sigma)\subset\mathcal{P}(\Sigma)\cong\mathcal{C}(\Sigma)$ identifies $\mathcal{T}(\Sigma)$ as the zero section of the vector bundle.

Taking the volume entropy of the Hilbert metric (see Section \ref{sec_proof} below for the definitions) for each element of $\mathcal{P}(\Sigma)$ yields a function $\volen:\mathcal{P}(\Sigma)\rightarrow\mathbb{R}_+$.  

The Hilbert metric for a point in the Teichm\"uller space $\mathcal{T}(\Sigma)\subset\mathcal{P}(\Sigma)$ is just the hyperbolic metric representing that point. Its entropy is just $1$. On the other hand, M. Crampon proved in \cite{crampon} that $\volen$ is strictly less than $1$ outside $\mathcal{T}(\Sigma)$. 
Motivated by a question of Crampon, we constructed in \cite{nie_1} certain paths in $\mathcal{P}(\Sigma)$ along which $\volen$ tends to $0$ (the construction works for dimensions $3$ and $4$ as well, here we only look at dimension $2$ though). Using different techniques, T. Zhang constructed in  \cite{zhang} some submanifolds of $\mathcal{P}(\Sigma)$ with the same property. In view of these constructions, one naturally guess that  $\volen$ tends to $0$ along any sequence going away from $\mathcal{T}(\Sigma)$.
The purpose of this note is to prove such a statement. Namely,
\begin{theorem*}
Fix a conformal structure $\ac{J}$ on $\Sigma$. Let $(\vect{b}_n)$ be a sequence in the space of cubic differentials $V:=H^0((\Sigma,\ac{J}), K^3)$ and let $\delta_n$ be the volume entropy of Hilbert metric of the convex projective structure corresponding to $(\ac{J},\vect{b}_n)$. Then $\delta_n$ tends to $0$ as $n\rightarrow+\infty$ if and only if $\vect{b}_n$ tends to infinity in $V$.
\end{theorem*}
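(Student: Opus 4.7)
The ``only if'' direction is routine. If $\vect{b}_n$ stays in a bounded subset of $\V$, one may pass to a subsequence converging to some $\vect{b}_\infty\in\V$; the corresponding convex projective structures then converge in $\mathcal{P}(\Sigma)$, and continuity of $\volen$ (established by Crampon) gives $\delta_n\to\volen(\ac{J},\vect{b}_\infty)$. But the Hilbert metric on a divisible strictly convex domain is Gromov hyperbolic, so its volume entropy is strictly positive, contradicting $\delta_n\to 0$.

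For the ``if'' direction the strategy is to pass from the Finsler Hilbert metric to the Riemannian Blaschke metric and then exploit the PDE governing the latter. The Benoist--Hulin theorem provides a universal constant $C>0$ such that, on every convex projective surface, the Hilbert metric $h$ and the Blaschke metric $g_{\B}$ are $C$-bi-Lipschitz equivalent; together with the comparability of their volume forms (also with universal constants), this yields
\[ C^{-1}\volen(g_{\B}) \le \volen(h) \le C\,\volen(g_{\B}). \]
Hence it suffices to prove $\volen(g_n)\to 0$, where $g_n$ is the Blaschke metric of $(\ac{J},\vect{b}_n)$. This is a purely Riemannian statement to which the machinery of the Labourie--Loftin theorem can be applied directly.

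Write $g_n=e^{2u_n}g_\hyp$ with $g_\hyp$ the fixed hyperbolic metric in the conformal class $\ac{J}$. The conformal factor $u_n$ is a solution of Wang's equation, a semilinear elliptic PDE on $(\Sigma,\ac{J})$ in which $\|\vect{b}_n\|^2_{g_\hyp}$ appears as a source term. The maximum principle, applied to this equation, yields pointwise lower bounds on $u_n$ in terms of $|\vect{b}_n|_{g_\hyp}$, so that away from the $6(g-1)$ zeros of $\vect{b}_n$ one has $u_n\to+\infty$ as $\vect{b}_n\to\infty$ in $\V$. Geometrically, $g_n$ becomes arbitrarily large on the complement of any fixed neighbourhood of $\mathrm{Zero}(\vect{b}_n)$, its area blows up, and consequently the orbital counting function $\#\{\gamma\in\pi_1(\Sigma):d_{g_n}(x,\gamma x)\le R\}$ should grow with exponent tending to $0$, giving $\volen(g_n)\to 0$.

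I expect the main obstacle to be this final geometric step. Even though $g_n$ is large away from the zeros of $\vect{b}_n$, one must control the contribution to the orbital counting function coming from the $6(g-1)$ neighbourhoods of those zeros, where $g_n$ remains comparable to the hyperbolic metric and could a priori harbour enough short closed geodesics to keep $\volen(g_n)$ bounded away from zero. The hard part will be to make the picture quantitative, for instance by comparing $g_n$ with a concretely constructed Riemannian model that equals $g_\hyp$ near $\mathrm{Zero}(\vect{b}_n)$ and grows elsewhere, or by exploiting the geometry of the cubic-differential flat metric $|\vect{b}_n|^{2/3}$ to which $g_n$ conjecturally converges on the complement of the zeros.
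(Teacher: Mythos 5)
Your reduction of the ``if'' direction to the Blaschke metric via the Benoist--Hulin comparability is exactly the paper's first step, but after that your argument stops at a heuristic, and the missing step is the actual content of the proof. Knowing that $u_n\to+\infty$ away from the zeros of $\vect{b}_n$, or that $\area(g_n)\to\infty$, does not by itself give $\volen(g_n)\to 0$: Katok's inequality only bounds entropy from below in terms of area, and a metric of enormous area can perfectly well keep its entropy bounded away from zero, so ``area blows up, hence the orbital counting exponent tends to $0$'' is not a valid inference. The paper closes this gap with a short, completely different mechanism: non-positivity of the curvature of the Blaschke metric is, via Wang's equation, equivalent to the \emph{global} pointwise bound $g_\B\geq 2^{\frac13}|\vect{b}|^{\frac23}$ (valid also at the zeros of $\vect{b}$), and pointwise domination of metrics reverses the entropy inequality, so $\volen(g_\B)\leq\volen\bigl(2^{\frac13}|\vect{b}|^{\frac23}\bigr)$. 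Then the scaling property $\volen(tg)=t^{-1}\volen(g)$ applied to the flat metric gives $\volen\bigl(2^{\frac13}|\vect{b}_n|^{\frac23}\bigr)=2^{-\frac13}\|\vect{b}_n\|^{-\frac23}\volen\bigl(|\vect{b}_n/\|\vect{b}_n\||^{\frac23}\bigr)$, and a continuity-plus-compactness argument on the unit sphere of the finite-dimensional space $V$ bounds the last factor uniformly, so the whole expression tends to $0$. Your worry about short geodesics near the zeros of $\vect{b}_n$ is thus bypassed entirely: one never needs the Blaschke metric to converge to the flat metric, only to dominate it, and the flat metric (despite its conic singularities) has finite positive entropy by \v{S}varc--Milnor. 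Without an ingredient of this kind your sketch does not yield the ``if'' direction.

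For the ``only if'' direction your argument is the ``immediate'' one the paper explicitly declines to use: extracting a convergent subsequence $\vect{b}_{n_k}\to\vect{b}_\infty$ and concluding that the corresponding projective structures converge requires continuity of the Labourie--Loftin correspondence $\mathcal{C}(\Sigma)\to\mathcal{P}(\Sigma)$ (and continuity of $\volen$ on $\mathcal{P}(\Sigma)$), for which the paper states no proof exists in the literature; this is precisely why it gives a self-contained alternative. The paper instead integrates Wang's equation to get $\area(g_\B)\leq 2\pi|\chi(\Sigma)|+\area\bigl(2^{\frac13}|\vect{b}|^{\frac23}\bigr)$ and combines this with Katok's inequality: if $\volen(g_\B^{(n)})\to 0$ then $\area(g_\B^{(n)})\to\infty$, hence $\area\bigl(|\vect{b}_n|^{\frac23}\bigr)\to\infty$, hence $\vect{b}_n\to\infty$ in $V$. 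You should either supply (or cite a genuine proof of) the continuity you invoke, or adopt an argument of this latter kind.
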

The ``only if'' part is an immediate consequence of the continuity of the map $\volen:\mathcal{P}(\Sigma)\rightarrow\mathbb{R}_+$ and the continuity of the Labourie-Loftin bijection $\mathcal{P}(\Sigma)\cong\mathcal{C}(\Sigma)$. However, to the knowledge of the author, a proof of the latter does not exist in the literature, so we will give an alternative proof of the above theorem without using these continuities.

The theorem is a simple manifestation of the general philosophy that a degenerating convex projective surface looks bigger and bigger and more and more flat. See \cite{loftin_limit, parreau} for more circumstantial statements confirming this philosophy.

\section{The proof and speculations}\label{sec_proof}
We first briefly review the backgrounds.

Let $\widetilde{\Sigma}$ be the universal cover of $\Sigma$ and $\Gamma=\pi_1(\Sigma)$ be the fundamental group. Let  $g$ be either a Riemannian metric, a Finsler metric or a flat metric with conic singularities on $\Sigma$.
Fix a base point $x_0\in\widetilde{\Sigma}$.
The volume entropy of  $g$ can be defined as
$$
\volen(g)=\limsup_{R\rightarrow+\infty}\frac{1}{R}\log \#(B_g(x_0,R)\cap \Gamma.x_0)\in\mathbb{R}_{\geq 0}\cup \{+\infty\},
$$
where the symbol ``$\#$'' means taking the cardinal of a set,  while $\Gamma.x_0$ is the $\Gamma$-orbit of $x_0$ and $B_g(x_0,R)\subset\widetilde{\Sigma}$ is the ball of radius $R$ centered at $x_0$ with respect to the distance on $\widetilde{\Sigma}$ induced by the lift of $g$. 

We will make use of the following properties of the entropy:
\begin{itemize}
\item  The scaling property: $\volen(tg)=t^{-1}\volen(g)$ for any $t>0$.
\item Given metrics $g_1$ and $g_2$, if their induced distances $d_1$ and $d_2$ on $\widetilde{\Sigma}$ are quasi-isometric in the sense that
\begin{equation}\label{eqn_quasiiso}
a^{-1}d_1(\,\cdot\,,\,\cdot\,)-b\leq d_2(\,\cdot\,,\,\cdot\,)\leq  a\,d_1(\,\cdot\,,\,\cdot\,)+b,
\end{equation}
for some constants $a>1$ and $b>0$, then $$a^{-1}\volen(g_1) \leq\volen(g_2)\leq a\,\volen(g_1).$$
\item A theorem of Katok \cite{katok}: for any Riemannian metric $g$ on $\Sigma$, the normalized entropy $\volen(g)^2\cdot\area(g)$ (which is invariant under scaling) satisfies
\begin{equation}\label{eqn_katok}
\volen(g)^2\cdot\area(g)\geq 2\pi|\chi(\Sigma)|=\volen(g_\hyp)^2\cdot\area(g_\hyp)
\end{equation}
where $g_\hyp$ is any hyperbolic metric on $\Sigma$ and $\chi(\Sigma)$ is the Euler characteristic of $\Sigma$. Furthermore, equality occurs if and only if $g$ is hyperbolic.
\end{itemize}

An important consequence of the second property is that
$$0<\volen(g)<+\infty.$$ This follows from the \v{S}varc-Milnor Lemma, which implies that the distance on $\widetilde{\Sigma}$ induced by a geodesic metric is isometric to the hyperbolic plane.

A convex projective structure on $\Sigma$ gives rise to the following objects on $\Sigma$ (see \cite{loftin_amer, labourie_cubic, benoist-hulin, nie_2} for details):
\begin{itemize}
\item a Finsler metric $g_\H$, called the \emph{Hilbert metric};
\item a Riemannian metric $g_\B$, called the \emph{Blaschke metric};
\item a holomorphic cubic differential $\vect{b}$ with respect to the conformal structure underlying $g_\B$, called the \emph{Pick differential}.
 \end{itemize} 
 Moreover, $g_\B$ and $\vect{b}$ satisfy \emph{Wang's equation}
\begin{equation}\label{eqn_wang}
\kappa_{g_\B}=-1+2||\vect{b}||^2_{g_\B}.
\end{equation}
Here $\kappa_{g_\B}$ is the curvature of $g_\B$ and $||\vect{b}||^2_{g_\B}$ is the pointwise norm of $\vect{b}$ with respect to $g_\B$, namely, if $g_\B=h(z)|\dz|^2$ and $\vect{b}=b(z)\dz^3$ in a local coordinate $z$, then 
$$
||\vect{b}||^2_{g_\B}(z):=\frac{|b(z)|^2}{h(z)^3}.
$$ 

The Labourie-Loftin theorem says that, for each pair $(\ac{J}, \vect{b})$ mentioned in the introduction, there is a unique Riemannian metric $g$ conformal to $\ac{J}$ such that $g$ and $\vect{b}$ are respectively the Blaschke metric and the Pick differential of a convex projective structure. This gives the identification $\mathcal{P}(\Sigma)\cong\mathcal{C}(\Sigma)$ discussed in  the introduction.

The entropy $\volen(g_\H)$ of the Hilbert metric is an interesting and systematically studied quantity because it equals the topological entropy of an interesting dynamical system -- the geodesic flow of a convex projective surface (see \cite{crampon}).

As in the introduction, with an abuse of notation, we also let $\volen: \mathcal{P}(\Sigma)\rightarrow \mathbb{R}_+$ denote the function assigning to each convex projective structure the entropy of its Hilbert metric.

\begin{proof}[Proof of the ``if'' part of the theorem]
Proposition 3.4 in \cite{benoist-hulin} implies that there is universal constant $c>1$ such that the Hilbert metric $g_\H$ and the Blaschke metric $g_\B$ of any convex projective structure on $\Sigma$ satisfy
$$
c^{-1}\|v\|_{g_\B}\leq \|v\|_{g_\H}\leq c\|v\|_{g_\B}
$$
for any tangent vector $v$ of $\Sigma$. Here $\|v\|_{g_\B}$ and $\|v\|_{g_\H}$ denotes the norm of $v$ with respect to $g_\B$ and $g_\H$, respectively. As a consequence, we have
\begin{equation}\label{eqn_bh}
c^{-1}\volen(g_\B)\leq\volen(g_\H)\leq c\,\volen(g_\B).
\end{equation}

Given a convex projective structure with Blaschke metric $g_\B$ and Pick differential $\vect{b}$, we consider the flat metric with conic singularity $|\vect{b}|^\frac{2}{3}$.
By definition, if $\vect{b}=b(z)\dz^3$ under a conformal local coordinate $z$, then $|\vect{b}|^\frac{2}{3}:=|b(z)|^\frac{2}{3}|\dz|^2$.

It is a well known fact that $g_\B$ has non-positive curvature (see \eg \cite{nie_2} Coro. 6.2), or equivalently, there is a pointwise majorization
\begin{equation}\label{eqn_bp}
g_\B\geq 2^\frac{1}{3}|\vect{b}|^\frac{2}{3}
\end{equation}
(the equivalence follows from Wang's equation (\ref{eqn_wang})). Therefore we have
\begin{equation}\label{eqn_ent}
\volen(g_\B)\leq \volen(2^\frac{1}{3}|\vect{b}|^\frac{2}{3}).
\end{equation}

Let $(\vect{b}_n)$ be a sequence in $V=H^0((\Sigma,\ac{J}),K^3)$ tending to infinity. In view of (\ref{eqn_bh}) and (\ref{eqn_ent}), in order to prove the ``if'' part of the theorem, it is sufficient to show that $\volen(2^\frac{1}{3}|\vect{b}_n|^\frac{2}{3})$ tends to $0$.

To this end, we note that the function 
$$
V\setminus\{0\}\rightarrow\mathbb{R}_{\geq 0},\quad \vect{b}\mapsto \volen(2^\frac{1}{3}|\vect{b}|^\frac{2}{3})
$$
is continuous, because for $\vect{b}, \vect{b}'\in V\setminus\{0\}$, the quasi-isometry constant $a$ in (\ref{eqn_quasiiso}) between the lifts of $|\vect{b}|^\frac{2}{3}$ and $|\vect{b}'|^\frac{2}{3}$ tends to $1$ as $\vect{b}'$ approaches $\vect{b}$.  Therefore, if we fix a norm $\|\cdot\|$ on $\V$ and let $S\subset\V$ be the unit sphere, then $M:=\max_{\vect{b}\in S}\volen(|\vect{b}|^\frac{2}{3})\in\mathbb{R}_+$ exists. Thus
\begin{align*}
\volen(2^\frac{1}{3}|\vect{b}_n|^\frac{2}{3})=\volen\left(2^\frac{1}{3}\|\vect{b}_n\|^\frac{2}{3}\left|\frac{\vect{b}_n}{\|\vect{b}_n\|}\right|^\frac{2}{3}\right)&=2^{-\frac{1}{3}}\|\vect{b}_n\|^{-\frac{2}{3}}\volen\left(\left|\frac{\vect{b}_n}{\|\vect{b}_n\|}\right|^\frac{2}{3}\right)\\
&\leq 2^{-\frac{1}{3}}\|\vect{b}_n\|^{-\frac{2}{3}}M
\end{align*}
The last term tends to $0$ because $\|\vect{b}_n\|$ tends to $+\infty$, as required.
\end{proof}

In order to prove the ``only if'' part of the theorem without using the continuity of the Labourie-Loftin bijection, as mentioned in the introduction, we need the following lemma.
\begin{lemma*}
Given a convex projective structure on $\Sigma$ with Blaschke metric $g_\B$ and Pick differential $\vect{b}$, the total area of $g_\B$ and that of the flat metric $2^\frac{1}{3}|\vect{b}|^\frac{2}{3}$ satisfy
$$
\area(g_\B)\leq 2\pi|\chi(\Sigma)|+\area(2^\frac{1}{3}|\vect{b}|^\frac{2}{3}).
$$
\end{lemma*}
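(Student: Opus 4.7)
The plan is to use Gauss--Bonnet applied to the Blaschke metric, together with Wang's equation, to express $\area(g_\B)$ as $2\pi|\chi(\Sigma)|$ plus an integral involving $\|\vect{b}\|^2_{g_\B}$, and then dominate that integral pointwise by the area form of $2^{1/3}|\vect{b}|^{2/3}$ using the non-positive curvature estimate (\ref{eqn_bp}).

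First, by Gauss--Bonnet and Wang's equation (\ref{eqn_wang}),
\begin{equation*}
2\pi\chi(\Sigma)=\int_\Sigma\kappa_{g_\B}\,d\area(g_\B)=-\area(g_\B)+2\int_\Sigma\|\vect{b}\|^2_{g_\B}\,d\area(g_\B).
\end{equation*}
Since $\chi(\Sigma)=-|\chi(\Sigma)|$, rearranging gives
\begin{equation*}
\area(g_\B)=2\pi|\chi(\Sigma)|+2\int_\Sigma\|\vect{b}\|^2_{g_\B}\,d\area(g_\B).
\end{equation*}
So it suffices to prove the pointwise majorization
\begin{equation*}
2\|\vect{b}\|^2_{g_\B}\,d\area(g_\B)\leq d\area\bigl(2^{\frac{1}{3}}|\vect{b}|^{\frac{2}{3}}\bigr).
\end{equation*}

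Writing $g_\B=h(z)|\dz|^2$ and $\vect{b}=b(z)\dz^3$ in a local conformal coordinate, the left-hand side equals $\frac{2|b|^2}{h^2}$ times Lebesgue measure while the right-hand side equals $2^{1/3}|b|^{2/3}$ times Lebesgue measure. The desired inequality $\frac{2|b|^2}{h^2}\leq 2^{1/3}|b|^{2/3}$ is, upon rearrangement, exactly $h\geq 2^{1/3}|b|^{2/3}$, which is precisely the non-positivity of curvature rephrased as the pointwise bound (\ref{eqn_bp}). (At zeros of $\vect{b}$ both sides vanish, so one may argue on the complement of the discrete zero set, which has measure zero.) Combining this pointwise estimate with the Gauss--Bonnet identity above yields the claim.

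There is no serious obstacle: the only conceptual ingredient beyond Gauss--Bonnet is recognizing that the very same inequality (\ref{eqn_bp}) that was invoked in the previous step as a sub-Riemannian comparison also turns into the right pointwise pinching between the two area forms once Wang's equation is expanded. The mild technical point is to make sure that the conic singularities of $|\vect{b}|^{2/3}$ do not affect any of the integrations, which is immediate since the singular set is finite.
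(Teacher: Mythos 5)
Your proof is correct and follows essentially the same route as the paper: integrate Wang's equation, apply Gauss--Bonnet, and then dominate $2\|\vect{b}\|^2_{g_\B}\,\mathsf{dvol}_{g_\B}$ pointwise by the area form of $2^{\frac{1}{3}}|\vect{b}|^{\frac{2}{3}}$ via the bound (\ref{eqn_bp}). The only cosmetic difference is that you rearrange the pointwise inequality to $h\geq 2^{\frac{1}{3}}|b|^{\frac{2}{3}}$ (and remark on the zero set of $\vect{b}$), whereas the paper substitutes (\ref{eqn_bp}) directly into the denominator; the content is identical.
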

\begin{proof}
The Wang's equation (\ref{eqn_wang}) satisfied by $g_\B$ and $\vect{b}$ can be written as
$$
1=-\kappa_{g_\B}+2\|\vect{b}\|^2_{g_\B}.
$$
Integrating both sides over $\Sigma$ with respect to the volume form of $g_\B$, then applying Gauss-Bonnet Formula to the first term on the right-hand side, we get
$$
\area(g_\B)=2\pi|\chi(\Sigma)|+\int_\Sigma2\|\vect{b}\|^2_{g_\B}\mathsf{dvol}_{g_\B}.
$$
To prove the lemma, it is sufficient to show that the integrand $2\|\vect{b}\|^2_{g_\B}\mathsf{dvol}_{g_\B}$ in the last term above is pointwise majorized by the volume form $\mathsf{dvol}'$ of $2^\frac{1}{3}|\vect{b}|^\frac{2}{3}$, but this follows from the inequality (\ref{eqn_bp}): assuming $g_\B=h(z)|\dz|^2$ and $\vect{b}=b(z)\dz^3$ in a local coordinate $z=x+\vect{i} y$, we have
$$
2\|\vect{b}\|^2_{g_\B}\mathsf{dvol}_{g_\B}=\frac{2|b|^2}{h^2}\dx\wedge\dif y\leq \frac{2|b|^2}{(2^\frac{1}{3}|b|^\frac{2}{3})^2}\dx\wedge\dif y=2^\frac{1}{3}|b|^\frac{2}{3}\dx\wedge\dif y=\mathsf{dvol}'.
$$
\end{proof}

\begin{proof}[Proof of the ``only if'' part of the theorem]
Let $g_\B^{(n)}$ be the Blaschke metric of the convex projective structure corresponding to $(\ac{J},\vect{b}_n)$ and assume that
$$
\lim_{n\rightarrow+\infty}\volen(g_\B^{(n)})=0.
$$
We need to prove that the cubic differential $\vect{b}_n$ tends to infinity in $H^0((\Sigma,\ac{J}),K^3)$, or equivalently, the total area of the associated flat metric
$|\vect{b}_n|^\frac{2}{3}$ tends to infinity. But this follows from Katok's inequality (\ref{eqn_katok}) and the above lemma, because they imply
$$
\frac{2\pi|\chi(\Sigma)|}{\volen(g_\B^{(n)})^2}\leq \area(g_\B^{(n)})\leq 2\pi|\chi(\Sigma)|+\area(2^\frac{1}{3}|\vect{b}_n|^\frac{2}{3}).
$$
This completes the proof of the theorem.
\end{proof}

To conclude, we give some comments on the case where $\Sigma$ is a punctured surface of finite type, \ie  $\Sigma$ is obtained from a closed oriented surface by removing finitely many punctures. As a generalization of the Labourie-Loftin identification $\mathcal{P}(\Sigma)\cong\mathcal{C}(\Sigma)$, Benoist and Hulin \cite{benoist-hulin} identified the space of convex projective structures with finite Hilbert volume with the space of those pairs $(\ac{J},\vect{b})$ where $\ac{J}$ is a punctured Riemann surface structure and $\vect{b}$ is a holomorphic cubic differential with at most second order pole at each puncture. We gave in \cite{nie_2} a further generalization. 

The above theorem holds perhaps in these more general settings as well. However, the above proof does not work. Indeed, at a pole of order $\leq 2$ , the metric $|\vect{b}|^\frac{2}{3}$ is incomplete and its pullback to the universal cover is not quasi-isometric to the hyperbolic plane, hence whether $|\vect{b}|^\frac{2}{3}$ has finite entropy remains a problem.  
%A seemly more natural approach in this situation is to use the asymptotic result on $g_\B$ provided by Loftin \cite{loftin_limit} (although Lofin also assumed $\Sigma$ to be closed, his results easily adapt to the settings of \cite{benoist-hulin} and \cite{nie_2}). 

\subsection*{Acknowledgement} We would like to thank the referee for a helpful remark on the first draft of the paper.

\bibliographystyle{amsalpha} \bibliography{entropysurface}

\end{document}